\theoremstyle{plain} 
\newtheorem{theorem}    {Theorem}[section]
\newtheorem{lemma}      [theorem]{Lemma}
\theoremstyle{definition}
\theoremstyle{remark}
\newtheorem{remark}              [theorem]{Remark}
\newtheorem{question}   [theorem]{Question}
\numberwithin{equation}{section}
\def\A{\mathbb A}
\def\C{\mathbb C}
\def\F{\mathbb F}
\def\Q{\mathbb Q}
\begin{document}

\title{A conjectural refinement of strong multiplicity one for {\rm GL}($n$)}

\author[N. Walji]{Nahid Walji}
\address{Department of Mathematics, University of British Columbia, Vancouver, B.C., V6T 1Z2, Canada}
\email{nwalji@math.ubc.ca}

\maketitle
\begin{abstract} 
Given a pair of distinct unitary cuspidal automorphic representations for {\rm GL}($n$) over a number field, let $S$ denote the set of finite places at which the automorphic representations are unramified and their associated Hecke eigenvalues differ. In this note, we demonstrate how conjectures on the automorphy and possible cuspidality of adjoint lifts and Rankin--Selberg products imply lower bounds on the size of $S$. We also obtain further results for ${\rm GL}(3)$.\\
\end{abstract}

\section{Introduction}
Let $\pi$ and $\pi'$ be two distinct unitary cuspidal automorphic representations for ${\rm GL}(n)/F$, where $F$ is a number field. Define $X=X(\pi,\pi')$ to be the set containing exactly the archimedean places and the finite places at which either $\pi$ or $\pi'$ is ramified.
Associated to each place $v \not \in X$, is the multiset of $n$ Satake parameters for $\pi$. The Hecke eigenvalue associated to $\pi$ at $v$, denoted $a_v(\pi)$,  is then defined to be the sum of these Satake parameters.
We ask
\begin{question}\label{Qu1}
What can be said about the size of the set
\begin{align*}
S (\pi,\pi'):=\{v \not \in X \mid a_v(\pi) \neq a_v(\pi') \}\ ?
\end{align*}
\end{question}

An answer to this was provided by Jacquet and Shalika~\cite{JS81}, who proved that if $\pi \not \simeq \pi'$, then $S=S(\pi,\pi')$ is infinite.
One expects a stronger bound on the size of $S$ to hold;
 a conjecture of Ramakrishnan~\cite{Ra94b} states that given unitary cuspidal automorphic representations $\pi$ and $\pi'$ for ${\rm GL}(n)/F$, if $\pi \not \simeq \pi'$, then $\underline{\delta} (\pi,\pi') \geq 1/2n ^2$ (where $\underline{\delta}(\pi,\pi')$ represents the lower Dirichlet density of the set $S(\pi,\pi')$).
This bound is known to be a consequence of the Ramanujan conjecture. It is unconditional in the case of $n = 2$~\cite{Ra94a}, but not known for any larger $n$. For every $n$, the conjectured bound would be sharp, given a family of examples of Serre~\cite{Se81}.
In terms of progress towards this bound, a result of Rajan~\cite{Ra03} states that if $\pi \not \simeq \pi'$ then $S$ has the property that the sum
\begin{align*}
\sum_{v \in S}{\rm N}v ^{- 2/ (n ^2 + 1)},
\end{align*}
does not converge. 

In {\rm GL}(2), more is known. In addition to the conjecture of Ramakrishnan, a stronger (and sharp) bound of 1/4 (on the lower Dirichlet density) holds~\cite{Wa14} under the condition that neither of the automorphic representations correspond to an Artin representation of dihedral type.

For {\rm GL}(3), unconditional positive bounds are not known; however, based on a study of suitable finite group representations, one has the prediction that for $\pi$ and $\pi'$ that are not automorphically induced from characters, a bound of 2/7 should hold~\cite{MW17}.

For {\rm GL}(n) where $n$ is greater than 2, there are much fewer cases of functoriality known, making unconditional progress difficult. This is in contrast to the situation for {\rm GL}(2). For example, in 1978, the work of Gelbart--Jacquet~\cite{GJ78} proved that the adjoint lift from {\rm GL}(2) to {\rm GL}(3) (which is twist-equivalent to the symmetric square lift) is automorphic.
A cuspidality criterion was also obtained: the adjoint lift of $\pi$ is cuspidal if and only if $\pi$ is not automorphically induced from a Hecke character.
Ramakrishnan~\cite{Ra00} later showed that if the adjoint lifts of two cuspidal automorphic representations $\pi$,$\pi'$ for {\rm GL}(2) are equal, then $\pi$ and $\pi'$ must be twist-equivalent.
The Langlands functoriality conjectures include the prediction that the adjoint lift from ${\rm GL}(n)$ to ${\rm GL}(n^2-1)$ should be automorphic for any $n > 2$, but this is far from known in general. 

One can ask about the implications of assuming the automorphy, and cuspidality, of the adjoint lift with respect to obtaining bounds in response to question~\ref{Qu1}.
\begin{theorem}\label{t1}
Let $\pi$ and $\pi'$ be distinct unitary cuspidal automorphic representations for ${\rm GL}(n)/F$, and assume that the adjoint lifts of $\pi$ and $\pi'$ are automorphic. \\ 
(a) If both adjoint lifts are cuspidal, then $$\underline{\delta} (\pi,\pi') \geq \frac{1}{8}.$$ 
(b) In addition to the cuspidality assumption, if furthermore we assume that the adjoint lifts are distinct, then
\begin{align*}
\underline{\delta}(\pi,\pi') \geq \frac{1}{3 + 2 \sqrt{2}}.
\end{align*}
\end{theorem}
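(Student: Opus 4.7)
The plan is to prove both parts by the standard Rankin--Selberg density technique combined with a Cauchy--Schwarz inequality on $S$, using the automorphy and cuspidality of the adjoint lifts to sharpen the relevant fourth-moment estimates. Write $L:=\log\!\bigl(1/(s-1)\bigr)$ and $\beta_v:=a_v(\pi)-a_v(\pi')$, and let $\sim$ mean equality up to $O(1)$ as $s\to 1^{+}$. Because $\beta_v=0$ on the complement of $S$, expansion and comparison with Rankin--Selberg $L$-functions gives
\[
\sum_{v\in S}|\beta_v|^{2}\,\mathrm{N}v^{-s}\;\sim\;\log L(s,\pi\times\tilde\pi)+\log L(s,\pi'\times\tilde{\pi'})-2\Re\log L(s,\pi\times\tilde{\pi'})\;\sim\;2L,
\]
since the first two terms contribute $L$ each from their simple poles, and the third is bounded because $\pi\not\simeq\pi'$. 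The adjoint hypothesis enters through the identity $|a_v(\pi)|^{2}=1+a_v(\mathrm{Ad}(\pi))$: cuspidality of $\mathrm{Ad}(\pi)$ gives $\sum_v a_v(\mathrm{Ad}(\pi))\,\mathrm{N}v^{-s}=O(1)$, while self-duality of $\mathrm{Ad}$ and the simple pole of $L(s,\mathrm{Ad}(\pi)\times\widetilde{\mathrm{Ad}(\pi)})$ yield $\sum_v a_v(\mathrm{Ad}(\pi))^{2}\,\mathrm{N}v^{-s}\sim L$. Squaring and summing then gives $\sum_v|a_v(\pi)|^{4}\,\mathrm{N}v^{-s}\sim 2L$, and analogously for $\pi'$.

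For (a) I would apply Cauchy--Schwarz in the form
\[
\Bigl(\sum_{v\in S}|\beta_v|^{2}\,\mathrm{N}v^{-s}\Bigr)^{\!2}\le\Bigl(\sum_{v\in S}\mathrm{N}v^{-s}\Bigr)\Bigl(\sum_{v\in S}|\beta_v|^{4}\,\mathrm{N}v^{-s}\Bigr),
\]
together with the elementary bound $|\beta_v|^{4}\le(|a_v(\pi)|+|a_v(\pi')|)^{4}\le 8\bigl(|a_v(\pi)|^{4}+|a_v(\pi')|^{4}\bigr)$, yielding $\sum_{v\in S}|\beta_v|^{4}\,\mathrm{N}v^{-s}\le 32L+o(L)$. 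Rearranging gives $\sum_{v\in S}\mathrm{N}v^{-s}\ge L/8+o(L)$ and hence $\underline{\delta}(\pi,\pi')\ge 1/8$.

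For (b), the distinctness of the adjoints makes $L(s,\mathrm{Ad}(\pi)\times\mathrm{Ad}(\pi'))$ holomorphic at $s=1$; combined with $|a_v(\pi)|^{2}|a_v(\pi')|^{2}=(1+a_v(\mathrm{Ad}(\pi)))(1+a_v(\mathrm{Ad}(\pi')))$ this improves the cross moment to $\sum_v|a_v(\pi)|^{2}|a_v(\pi')|^{2}\,\mathrm{N}v^{-s}\sim L$ (versus $\sim 2L$ if the adjoints coincided). The fourth-moment bound above can then be sharpened via $(|a|+|b|)^{4}\le 2(|a|^{2}+|b|^{2})(|a|+|b|)^{2}$; expansion produces the terms $|a|^{4},|b|^{4},|a|^{2}|b|^{2}$ and the cubic mixed terms $|a|^{3}|b|,|a||b|^{3}$, the last two controlled by a further Cauchy--Schwarz
\[
\sum_v|a_v(\pi)|^{3}|a_v(\pi')|\,\mathrm{N}v^{-s}\le\Bigl(\sum_v|a_v(\pi)|^{4}\,\mathrm{N}v^{-s}\Bigr)^{\!1/2}\Bigl(\sum_v|a_v(\pi)|^{2}|a_v(\pi')|^{2}\,\mathrm{N}v^{-s}\Bigr)^{\!1/2}\sim\sqrt{2}\,L.
\]
Collecting all contributions gives $\sum_{v\in S}|\beta_v|^{4}\,\mathrm{N}v^{-s}\le(12+8\sqrt{2})L+o(L)$, and the principal Cauchy--Schwarz of (a) then produces $\underline{\delta}(\pi,\pi')\ge 4/(12+8\sqrt{2})=1/(3+2\sqrt{2})$.

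The main obstacle is the fourth-moment bookkeeping: the constant $3+2\sqrt{2}=(1+\sqrt{2})^{2}$ arises precisely because the distinct-adjoint hypothesis lowers $\sum|a_v(\pi)|^{2}|a_v(\pi')|^{2}$ from $\sim 2L$ down to $\sim L$, so that the $\sqrt{2}$ from the secondary Cauchy--Schwarz on the cubic mixed terms combines with the factor $2$ from $(|a|+|b|)^{2}\le 2(|a|^{2}+|b|^{2})$ to produce $(1+\sqrt{2})^{2}$ rather than the cruder constants (such as $4$ or $6$) that would emerge, for instance, from AM--GM treatment of the cross terms.
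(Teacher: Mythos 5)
Your proposal is correct and is essentially the paper's own argument: the identity $|a_v(\pi)|^{2}=1+a_v(\mathrm{Ad}\,\pi)$ is the coefficient-level form of the Clebsch--Gordan factorization of the quadruple product $L$-function used in Section~\ref{st1}, and your moment estimates (fourth moments of size at most $2\ell(s)$ from cuspidality and self-duality of the adjoint lifts, mixed moment at most $\ell(s)$ when the adjoints are distinct, second moment of $a_v(\pi)-a_v(\pi')$ of size $2\ell(s)$ since $\pi\not\simeq\pi'$) combined with Cauchy--Schwarz over $S$ are exactly the paper's inequalities~(\ref{eqn1}), (\ref{eqn2}) and~(\ref{eqn4}). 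The only deviation is cosmetic bookkeeping in assembling the fourth moment of the difference --- you use $(x+y)^{4}\le 8(x^{4}+y^{4})$ in (a) and $(|a|+|b|)^{4}\le 2(|a|^{2}+|b|^{2})(|a|+|b|)^{2}$ plus a secondary Cauchy--Schwarz in (b), where the paper uses Minkowski's inequality --- and both yield the same constants $32$ and $12+8\sqrt{2}$, hence the same bounds $1/8$ and $1/(3+2\sqrt{2})$.
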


\begin{remark}
Note that  $1/(3 + 2 \sqrt{2})= 0.1715...$ . We also remark that these bounds are uniform in $n$. In Ramakrishnan's conjecture the bound tends to zero as $n \rightarrow \infty$. The contrast with the uniform bound obtained here might be seen as a reflection of the strength of our assumptions in Theorem~\ref{t1}.
\end{remark}

These bounds are not expected to be sharp. Certainly, in the case of {\rm GL}(2) one knows that the optimal bounds are $1/4$ and $2/5$, respectively~\cite{Wa14}. Because we are only assuming one instance of functoriality, namely the automorphy of the adjoint lift, this imposes restrictions on the extent to which we can implement the method that we use.
If we also include assumptions about the automorphy and cuspidality of certain Rankin--Selberg products of $\pi, \pi'$ and their duals  $\widetilde{\pi}, \widetilde{\pi'}$ (respectively), then we can obtain stronger bounds:

\begin{theorem}\label{t2}
Let $\pi$ and $\pi'$ be distinct unitary cuspidal automorphic representations for ${\rm GL}(n)/F$ whose adjoint lifts and Rankin--Selberg products $\pi \boxtimes \pi'$ and $\pi \boxtimes \widetilde{\pi'}$, are all automorphic and furthermore cuspidal. 
If the adjoint lifts of $\pi$ and $\pi'$ are distinct, then 
\begin{align*}
\underline{\delta}(\pi,\pi') \geq \frac{2}{5}.
\end{align*}
\end{theorem}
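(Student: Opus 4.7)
The plan is to combine a second-moment computation with a Cauchy--Schwarz bound against a fourth-moment sum. Writing $u_v = a_v(\pi)$ and $w_v = a_v(\pi')$, the key elementary observation is that $|u_v - w_v|^{2k}$ vanishes off $S = S(\pi,\pi')$ for every $k \geq 1$. I would first compute the second moment by expanding $|u_v - w_v|^2 = |u_v|^2 + |w_v|^2 - u_v \overline{w_v} - \overline{u_v} w_v$ and identifying each Dirichlet series with the logarithm of a standard L-function. Since $\pi \boxtimes \widetilde{\pi} = {\rm Ad}(\pi) \boxplus \mathbf{1}$ and ${\rm Ad}(\pi)$ is cuspidal, the self-paired terms $|u_v|^2$ and $|w_v|^2$ each contribute a simple pole at $s = 1$ coming from $\zeta_F$, while the cross terms $u_v \overline{w_v}$ and $\overline{u_v} w_v$ give $O(1)$ by Jacquet--Shalika nonvanishing of $L(s, \pi \times \widetilde{\pi'})$ at $s = 1$ (using $\pi \not\simeq \pi'$). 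This yields
\[
\sum_{v \in S} |u_v - w_v|^2 \, {\rm N}v^{-s} = 2 \log\!\big((s-1)^{-1}\big) + O(1) \qquad \text{as } s \to 1^+.
\]

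Next I would bound the fourth moment from above. Expanding $|u_v - w_v|^4$ produces nine kinds of terms. The diagonal quartics $|u_v|^4$ and $|w_v|^4$ correspond to $(\pi \boxtimes \widetilde{\pi}) \boxtimes (\pi \boxtimes \widetilde{\pi}) \cong {\rm Ad}(\pi) \boxtimes {\rm Ad}(\pi) \boxplus 2\,{\rm Ad}(\pi) \boxplus \mathbf{1}$, and contribute a pole of order $2$ each (from the self-paired cuspidal ${\rm Ad}(\pi)$ plus $\zeta_F$). The term $|u_v|^2|w_v|^2$, which carries coefficient $4$, contributes a pole of order $1$: the hypothesis ${\rm Ad}(\pi) \not\simeq {\rm Ad}(\pi')$ is used here to rule out a pole of $L(s, {\rm Ad}(\pi) \times {\rm Ad}(\pi'))$, leaving only $\zeta_F$. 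The four mixed cubic terms such as $|u_v|^2 u_v \overline{w_v}$ correspond to $L(s, {\rm Ad}(\pi) \times (\pi \boxtimes \widetilde{\pi'})) \cdot L(s, \pi \boxtimes \widetilde{\pi'})$ and three analogues, each of which is holomorphic at $s = 1$: the first factor because the two representations are of different ${\rm GL}$-ranks ($n^2 - 1$ and $n^2$), the second by cuspidality of $\pi \boxtimes \widetilde{\pi'}$. This step is where the assumed cuspidal automorphy of $\pi \boxtimes \pi'$ and $\pi \boxtimes \widetilde{\pi'}$ is crucial, as it is what allows the quadruple products $(\pi \boxtimes \widetilde{\pi}) \boxtimes (\pi \boxtimes \widetilde{\pi'})$ and its variants to be interpreted as ordinary Rankin--Selberg L-functions of cuspidal representations.

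The main obstacle is the pair of terms $u_v^2 \overline{w_v}^2$ and $\overline{u_v}^2 w_v^2$, which correspond to $L(s, \Pi \times \Pi)$ where $\Pi = \pi \boxtimes \widetilde{\pi'}$. This L-function acquires a simple pole at $s = 1$ exactly when $\Pi$ is self-dual, i.e., when $\pi \boxtimes \widetilde{\pi'} \cong \widetilde{\pi} \boxtimes \pi'$, and nothing in the hypotheses rules this out; I would therefore take the worst-case bound of order $1$ for each. Summing all contributions,
\[
\sum_{v \in S} |u_v - w_v|^4 \, {\rm N}v^{-s} \leq (2 + 2 + 4 + 1 + 1)\log\!\big((s-1)^{-1}\big) + O(1) = 10 \log\!\big((s-1)^{-1}\big) + O(1).
\]
Applying Cauchy--Schwarz in the form
\[
\Big(\sum_{v \in S} |u_v - w_v|^2 \, {\rm N}v^{-s}\Big)^{2} \leq \Big(\sum_{v \in S} {\rm N}v^{-s}\Big) \Big(\sum_{v \in S} |u_v - w_v|^4 \, {\rm N}v^{-s}\Big)
\]
and substituting the two estimates yields $\sum_{v \in S} {\rm N}v^{-s} \geq \tfrac{2}{5}\log((s-1)^{-1}) + o(\log((s-1)^{-1}))$. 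Dividing through by $\log((s-1)^{-1})$ and taking $\liminf_{s \to 1^+}$ gives the desired bound $\underline{\delta}(\pi, \pi') \geq 2/5$.
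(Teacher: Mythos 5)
Your argument is essentially the paper's own proof: the same Cauchy--Schwarz comparison of the second moment ($2\ell(s)+O(1)$, using $\pi\not\simeq\pi'$) against the fourth moment, the same factorizations of the quadruple product $L$-functions through ${\rm Ad}\pi$, ${\rm Ad}\pi'$ and $\pi\boxtimes\widetilde{\pi'}$, and the same worst-case pole count $2+2+4+1+1=10$, giving $4\leq 10\,\underline{\delta}$ and hence $\underline{\delta}(\pi,\pi')\geq 2/5$. The one detail to tighten is that for the mixed cubic terms, which enter the expansion with a negative sign, holomorphy at $s=1$ is not quite enough --- you need the corresponding Rankin--Selberg $L$-functions to be invertible there (non-vanishing as well, by Shahidi), which is exactly what the paper invokes, together with its appeal to the Luo--Rudnick--Sarnak bounds to control the higher prime-power terms in the logarithms.
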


\begin{remark}
This bound is already known unconditionally when $n = 2$~\cite{Wa14}, in which case it is sharp. We do not expect the bound to be sharp for all $n$ (see, for example,~\cite{MW17}).
\end{remark}

In the case of ${\rm GL}(3)$, we obtain further bounds without needing to appeal to a cuspidality assumption. First, we define some terminology: Let $\Pi$ be a cuspidal automorphic representation for ${\rm GL}(3)$.
We say that $\Pi$ is \textit{associated} to a cuspidal automorphic representation $\pi$ for ${\rm GL}(2)$ if $\Pi$ is twist-equivalent to ${\rm Ad}(\pi)$ (this will be the case if $\Pi$ is essentially self-dual~\cite{Ra14}).
Additionally, if there exists an isobaric automorphic representation $\tau$ such that for some finite set $S$ of places we have $L^S(s, \tau)= L^S(s, \Pi, {\rm Ad})$, then we will say that the adjoint lift of $\Pi$ is \textit{weakly automorphic}.

\begin{theorem}\label{gl3}
Let $\Pi$ and $\Pi'$ be unitary cuspidal automorphic representations for ${\rm GL}(3)$ over a number field $F$. \\
(a) Assume that the adjoint lifts of $\Pi$ and $\Pi'$ are weakly automorphic, then 
\begin{align*}
\underline{\delta}(\Pi,\Pi') \geq 1/ 28
\end{align*}
(b) If $\Pi$ and $\Pi'$ are essentially self-dual, then
\begin{align*}
\underline{\delta}(\Pi,\Pi') \geq 2/(17+3\sqrt{21})  
\end{align*}
(c) If $\Pi$ and $\Pi'$ are essentially self-dual and not associated to a cuspidal automorphic representation for ${\rm GL}(2)$ of solvable polyhedral type, then
\begin{align*}
\underline{\delta}(\Pi,\Pi') \geq 1/12.
\end{align*}
\end{theorem}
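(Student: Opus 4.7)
In all three parts the strategy is to apply Cauchy--Schwarz
\[
\left|\sum_{v\in S}\frac{c_v}{Nv^s}\right|^2 \le \left(\sum_{v\in S}\frac{1}{Nv^s}\right)\cdot \sum_{v\in S}\frac{|c_v|^2}{Nv^s}
\]
with $c_v$ chosen to vanish outside $S=S(\Pi,\Pi')$ and such that the left-hand side grows as a definite multiple of $\log(1/(s-1))$ as $s\to 1^+$. This converts an upper bound on $\sum_{v\in S}|c_v|^2 Nv^{-s}$ into the desired lower bound on $\sum_{v\in S} Nv^{-s}$. I will take $c_v=a_v(\Pi)-a_v(\Pi')$, so the left side is controlled via the identity $\Pi\times\widetilde\Pi = 1\boxplus\mathrm{Ad}(\Pi)$ (for $\mathrm{GL}(3)$ the adjoint lives on $\mathrm{GL}(8)$), while the right side requires estimating the fourth moment $\sum_v |a_v(\Pi)-a_v(\Pi')|^4 Nv^{-s}$.

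\textbf{Part (a).} The weak automorphy hypothesis gives $L(s,\Pi\times\widetilde\Pi)=\zeta(s)L^S(s,\tau)$ for some isobaric $\tau$ on $\mathrm{GL}(8)$, hence $\sum_v|a_v(\Pi)|^2 Nv^{-s}\sim \log(1/(s-1))$, and likewise for $\Pi'$. The cross term $\sum_v a_v(\Pi)\overline{a_v(\Pi')}Nv^{-s}=O(1)$ since $L(s,\Pi\times\widetilde{\Pi'})$ is holomorphic and nonzero at $s=1$ by Jacquet--Shalika. Thus the left-hand side of Cauchy--Schwarz is of order $2\log(1/(s-1))$. For the fourth moment, expanding $|a_v(\Pi)-a_v(\Pi')|^4$ as a polynomial in $|a|^2,|b|^2,a\bar b,\bar a b$ and using the $\mathrm{GL}(3)$ identity $|a_v(\Pi)|^2 = 1+a_v(\mathrm{Ad}(\Pi))$ expresses each resulting Dirichlet series in terms of $L$-functions of the isobaric constituents of $\tau,\tau'$ and their standard Rankin--Selberg pairings (accessible even without higher functoriality). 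The resulting estimate combined with Cauchy--Schwarz yields the bound $1/28$.

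\textbf{Parts (b) and (c).} By Ramakrishnan~\cite{Ra14}, an essentially self-dual cuspidal $\Pi$ on $\mathrm{GL}(3)$ has the form $\Pi\simeq\mathrm{Ad}(\sigma)\otimes\chi$ for a cuspidal $\sigma$ on $\mathrm{GL}(2)$ and a Hecke character $\chi$; similarly $\Pi'\simeq\mathrm{Ad}(\sigma')\otimes\chi'$. One has the decomposition
\[
\mathrm{Ad}(\Pi)\simeq\mathrm{Ad}(\sigma)\boxplus\bigl(\mathrm{Sym}^4(\sigma)\otimes\omega_\sigma^{-2}\bigr),
\]
realizing weak automorphy of the adjoint for free, and reducing the Hecke eigenvalue data for $\Pi$ and $\Pi'$ to that of $\sigma,\sigma'$ together with the characters $\chi,\chi'$. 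All Rankin--Selberg pairings among the building blocks $\mathrm{Sym}^k(\sigma)$ and $\mathrm{Sym}^\ell(\sigma')$ for $k,\ell\le4$ are automorphic on the appropriate $\mathrm{GL}$ by Kim--Shahidi, so one can compute the relevant moments precisely via case analysis on how $\sigma,\sigma'$ are related (non-twist-equivalent; twist-equivalent with $\chi\bar\chi'$ of various orders). In part (c) the non-solvable-polyhedral hypothesis guarantees cuspidality of every $\mathrm{Sym}^k(\sigma)$ with $k\le4$, so the pole bookkeeping is clean and yields $1/12$. In part (b) one must also allow solvable polyhedral $\sigma$, whose higher symmetric powers decompose further; the additional cases to control the fourth moment give the irrational optimum $2/(17+3\sqrt{21})$, the square root arising from optimizing the Cauchy--Schwarz bound over a parameter that takes continuous values in this larger case-space.

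\textbf{Main obstacle.} The principal difficulty is in part (a): only weak automorphy of a single lift is available, so the fourth moment must be controlled using only $L$-functions of isobaric sums on $\mathrm{GL}(\le 8)$ and standard Rankin--Selberg products, with no recourse to higher symmetric-power functoriality from $\mathrm{GL}(3)$, and without assuming cuspidality of $\mathrm{Ad}(\Pi),\mathrm{Ad}(\Pi')$; the worst-case pole bookkeeping over all admissible isobaric types for $\tau,\tau'$ is what ultimately forces the weak constant $1/28$. In parts (b) and (c), the remaining work is combinatorial: enumerating twist relations between $\sigma$ and $\sigma'$ and tracking pole orders of Rankin--Selberg products of their symmetric powers, then optimizing to extract the stated bounds.
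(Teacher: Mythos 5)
Your overall strategy is the paper's: bound $\sum_{v\in S}|a_v(\Pi)-a_v(\Pi')|^2{\rm N}v^{-s}$ from below by $2\ell(s)+O(1)$ via Rankin--Selberg, and from above via Cauchy--Schwarz against $\underline{\delta}(S)$ and a fourth moment controlled through decompositions of the adjoint (resp.\ symmetric power) lifts. (As literally written your inequality with $c_v=a_v(\Pi)-a_v(\Pi')$ has a bounded left-hand side, since $\sum_v a_v(\Pi){\rm N}v^{-s}=O(1)$; you clearly intend the weighted version with $|a_v(\Pi)-a_v(\Pi')|^2$, as in the paper, so this is only a slip.) The genuine gap is in part (a): the constant $1/28$ is not a consequence of generic ``worst-case pole bookkeeping'' --- it requires structural constraints on the isobaric decomposition of $\tau$ and $\tau'$ that you never establish. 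The paper proves two lemmas: a nontrivial Hecke character can occur as a summand of $\tau$ at most once (a double occurrence would force a double pole of $L^X(s,\Pi\times(\widetilde{\Pi}\otimes\bar\chi))$), and a cuspidal ${\rm GL}(2)$ summand can occur at most once (using the Kim--Shahidi ${\rm GL}(2)\times{\rm GL}(3)$ product and the Ramakrishnan--Wang decomposition); by contrast a ${\rm GL}(3)$ summand \emph{can} repeat (the adjoint of a tetrahedral ${\rm Ad}\pi$ gives $\mu\boxplus\mu^2\boxplus{\rm Ad}\pi\boxplus{\rm Ad}\pi$). Without these multiplicity bounds the worst admissible $\tau$ could have a repeated character or repeated ${\rm GL}(2)$ constituent, giving $L(s,\tau\times\tau)$ a higher-order pole and a constant weaker than $1/28$. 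With them, the worst case is $\tau=\tau'=\kappa\boxplus\kappa\boxplus\chi\boxplus\chi^{-1}$, yielding fourth and mixed moments $\le 7\ell(s)$ and hence $4/(4\sqrt7)^2=1/28$; and the remaining case where all summands are ${\rm GL}(1)$ (eight distinct characters, moments $\le 9\ell(s)$) must be handled by the finer inequality involving $\sum_v|a_v(\Pi)-a_v(\Pi')|^4{\rm N}v^{-s}$ and the mixed moments $a_v(\Pi)^w\overline{a_v(\Pi)}^{\,x}a_v(\Pi')^y\overline{a_v(\Pi')}^{\,z}$, which gives $1/18$; the split Cauchy--Schwarz alone would give only $1/36$ there and would spoil the stated constant. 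None of this case analysis appears in your sketch.

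For parts (b) and (c) your reduction (essential self-duality gives $\Pi\simeq{\rm Sym}^2\pi\otimes\nu$, the adjoint decomposes as ${\rm Sym}^4\pi\otimes\omega^{-2}\boxplus{\rm Sym}^2\pi\otimes\omega^{-1}$, Kim and Kim--Shahidi supply automorphy and cuspidality criteria) is exactly the paper's, and (c) does fall out as you say: pole order $3$ for the quadruple products gives $4/(4\sqrt3)^2=1/12$. But your explanation of (b) is wrong in its key mechanism: $2/(17+3\sqrt{21})$ does not arise from optimizing a continuously varying parameter; it is simply the mixed case in which exactly one of $\pi,\pi'$ is solvable polyhedral, so one fourth moment is bounded by $7\ell(s)$, the other and the cross moment by $3\ell(s)$, and the split Cauchy--Schwarz gives $4/(\sqrt3+\sqrt7+2\sqrt3)^2=4/(34+6\sqrt{21})=2/(17+3\sqrt{21})$. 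Moreover, the case where \emph{both} are solvable polyhedral cannot be absorbed into this: the crude bounds $7\ell(s)$ for all three moments yield only $1/28<2/(17+3\sqrt{21})$, so (as in the paper) that case needs the finer mixed-moment argument to reach $1/14$. As written, your sketch neither performs this case split nor notices that it is needed, so it does not yet establish the stated constant in (b).
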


\begin{remark}
A cuspidal automorphic representation that is of solvable polyhedral type means that it is either of dihedral, tetrahedral, or octahedral type.
Also, note that parts (b) and (c) do not rely on any conjectures about functorial lifts.
\end{remark}

For completeness, we also mention another question in this context. Consider two cuspidal automorphic representations $\pi$ and $\pi'$ for {\rm GL}(n) over a number field $F$, both with analytic conductor less than some $Q$. What bound  $C=C(n,F,Q)$ can one establish such that if $\pi_{\mathfrak{p}} \simeq \pi'_\mathfrak{p}$ for all ${\rm N} \mathfrak{p} \leq C$, then it implies that $\pi = \pi'$?

Answers to this question were established by Moreno~\cite{Mo85}, and later Brumley~\cite{Br06}; further improvements were subsequently obtained by Wang~\cite{Wa08} and then Liu--Wang~\cite{LW09}. In this last paper, the authors show that it suffices to have, for any $\epsilon > 0$, a bound $C = cQ^{2n+\epsilon}$ with a suitable constant $c = c(n,F,\epsilon)$.

We also note related work of Murty--Rajan~\cite{MR96}. Let $\pi$ and $\pi'$ be cuspidal automorphic representations for {\rm GL}(2)/$\Q$. Assume that the $L$-functions of the form $L(s,{\rm Sym}^a(\pi)\times {\rm Sym}^b (\pi'))$, where $a=b$ or $a=b+2$, are entire, have suitable functional equations, and satisfy GRH. Then, for any $\epsilon > 0$,
\begin{align*}
\#\{p \leq x \mid a_p(\pi) = a_p(\pi') \} \ll x ^{5/6+ \epsilon}.
\end{align*}

This paper is organised as follows. In Section~\ref{backg}, we introduce some notation for the paper and cover the relevant background. In Section~\ref{exs}, we provide some context for the condition in the first two theorems of the cuspidality of the adjoint lift; various cases are described (some of which rely on the strong Artin conjecture) where the isobaric decomposition of the adjoint lift is discussed. In Section~\ref{pf1}, we cover the proofs of Theorems~\ref{t1} and~\ref{t2}. Lastly, in Section~\ref{pf2} we prove Theorem~\ref{gl3}.

\section{Background and notation}\label{backg}

Throughout this paper we will write $\ell(s)$ to denote $\log (1/(s-1))$. The \textit{lower Dirichlet density} for a set $S$ of places is defined to be 
\begin{align*}
\underline{\delta}(S) := \liminf_{s \rightarrow 1^+} \left(\frac{\sum_{v \in S}{\rm N}v^{-s} }{\ell(s)}\right) 
\end{align*}
and similarly for the \textit{upper Dirichlet density} $\overline{\delta}(S)$ if we replace the limit infimum by the limit supremum. If the upper and lower Dirichlet density are equal then the set has a Dirichlet density, denoted by $\delta (S)$.

\subsection{$L$-functions} For  an  automorphic representation $\pi$ for {\rm GL}(n) over a number field $F$, define $X=X(\pi)$ to be the set of all archimedean places as well as the places at which the automorphic representation is ramified. The incomplete $L$-function of $\pi$ with respect to $X$ is 
\begin{align*}
L^X(s,\pi) = \prod_{v \not \in X} {\rm det}\left(I_n - A_v(\pi) {\rm N}v^{-s} \right)^{-1} ,
\end{align*}
where $A_v(\pi)$ is a conjugacy class in ${\rm GL}_n(\C)$ that can be represented by a diagonal matrix that we denote as ${\rm diag}(\alpha_1,\alpha_2, \dots, \alpha_n)$. This $L$-function converges absolutely for ${\rm Re}(s)>1$.

Given a pair of unitary automorphic representations $\pi$ and $\pi'$ for {\rm GL}(n) and {\rm GL}(m), respectively, over some number field $F$, we let $X=X (\pi, \pi')$ be the set of archimedean places and places where either $\pi$ or $\pi'$ is ramified. Then the incomplete Rankin--Selberg $L$-function with respect to $X$ is
\begin{align*}
L^X(s, \pi \times \pi') = \prod_{v \not \in X} {\rm det}\left(I_{nm} - (A_v(\pi)\otimes A_v(\pi')) {\rm N}v^{-s} \right)^{-1},
\end{align*}
which converges absolutely for ${\rm Re}(s)>1$. It has a simple pole at $s=1$ if and only if $\pi'$ is dual to $\pi$~\cite{JS81}; otherwise it is invertible there~\cite{Sh81}.

The automorphy of the Rankin--Selberg product is known in the case of ${\rm GL}(2) \times {\rm GL}(2)$ due to Ramakrishnan~\cite{Ra00}, who also provides a cuspidality criterion. Though we do not use explicit cuspidality criteria in this paper and instead directly assume that certain Rankin--Selberg products are automorphic and cuspidal, for context we outline the criteria for the ${\rm GL}(2) \times {\rm GL}(2)$ case:
If $\pi$ and $\pi'$ are cuspidal and neither is automorphically induced from {\rm GL}(1), then $\pi \boxtimes \pi'$ is cuspidal if and only if $\pi$ and $\pi'$ are not twist-equivalent. If $\pi$ is automorphically induced from a Hecke character $\chi$ for ${\rm GL}(1)/K$, where $K$ is a quadratic extension of $F$, then $\pi \boxtimes \pi'$ is cuspidal if and only if the base change $\pi'_K$ is cuspidal and distinct from $\pi'_K \otimes (\chi \circ \tau)\chi^{-1}$, where $\tau$ is the non-trivial element of ${\rm Gal}(K / F)$.

Automorphy is also known in the case ${\rm GL}(2) \times {\rm GL}(3)$, due to Kim--Shahidi~\cite{KS00}, and with a cuspidality criterion proved in~\cite{RW04}. Given $\pi$ for ${\rm GL}(2) /F$ and $\pi'$ for ${\rm GL}(3) /F$, $\pi \boxtimes \pi'$ is cuspidal unless:
$\pi$ is not dihedral and its adjoint lift is twist-equivalent to $\pi'$,
or $\pi$ is dihedral and $\pi'$ is the automorphic induction of a Hecke character over a non-Galois cubic extension $L$ and $\pi'_L$ is Eisensteinian.

Given automorphic representations $\pi_1, \dots, \pi_4$ for ${\rm GL}(n_1), \dots, {\rm GL}(n_4)$ (respectively) over $F$, we can define, in a suitable right-half plane, the incomplete quadruple product $L$-function
\begin{align*}
L^X&(s, \pi_1 \times \pi_2 \times \pi_3 \times \pi_4) = \\
&\prod_{v \not \in X} {\rm det}\left(I_{n_1n_2n_3n_4} - (A_v(\pi_1)\otimes A_v(\pi_2) \otimes A_v(\pi_3) \otimes A_v(\pi_4) ) {\rm N}v^{-s} \right)^{-1},
\end{align*}
where $X= X(\pi_1, \pi_2, \pi_3, \pi_4)$ is the set of archimedean places and places at which at least one of the automorphic representations is ramified. 

We describe the construction of an adjoint $L$-function via the use of a Rankin--Selberg $L$-function and Dedekind zeta function
\begin{align*}
L(s, \pi, {\rm Ad}) := \frac{L(s, \pi \times \widetilde{\pi}) }{  \zeta_F (s)},
\end{align*}
and the adjoint $L$-function is meromorphic since $L(s, \pi \times \widetilde{\pi})$ and $\zeta_F (s)$ are meromorphic.
We now write, for $X= X (\pi)$,
\begin{align*}
L^X(s,\pi, {\rm Ad}) = \prod_{v \not \in X} {\rm det}\left(I_{n ^2 -1} - A_v(\pi, {\rm Ad}) {\rm N}v^{-s} \right)^{-1} ,
\end{align*}
where $A_v(\pi, {\rm Ad}) \in {\rm GL}_{n ^2 -1}(\C) $ can be represented by a diagonal matrix with eigenvalues $\alpha_i / \alpha_j$, for all $i,j$ with $i \neq j$, and $1$ ($n-1$ times).

Let $\pi$ be a cuspidal automorphic representation for ${\rm GL}_n(\A_F)$. If $n = 2$, then its adjoint lift ${\rm Ad}\pi$ is known to be automorphic, and furthermore, ${\rm Ad}\pi$ is cuspidal if and only if $\pi$ is not an automorphic induction of a Hecke character~\cite{GJ78}. For $n > 2$, the conjecture that the adjoint lift of $\pi$ is automorphic is wide open.

\subsection{Isobaric automorphic representations} Let $\pi_1,\pi_2, \dots , \pi_k$ be cuspidal automorphic representations for ${\rm GL}(n_1)$, ${\rm GL}(n_2)$, $\dots {\rm GL}(n_k)$, respectively, all over the same number field $F$. Then there exists an automorphic representation $\Pi$ for ${\rm GL}(n_1+\dots +n_k) $ such that, for any cuspidal automorphic representation $\pi'$ for some ${\rm GL}(m)/F$,
\begin{align*}
L(s, \Pi \times \pi') = \prod_{j = 1 }^{ k}L(s, \pi_j \times \pi').
\end{align*}
Such a $\Pi$ is called an \textit{isobaric automorphic representation} and we write $\Pi$ as $\boxplus_{j = 1 }^{ k} \pi_j$. See \cite{La79} and~\cite{JS81} for further details.
Note that, at every place $v$ where $\pi_1, \dots, \pi_k$ are unramified, we have $a_v(\Pi) = a_v(\pi_1) + \dots + a_v(\pi_k)$.

We also note the following: Let $\tau_1 \boxplus \dots \boxplus \tau_k $ and $ \sigma_1 \boxplus \dots \sigma_m$ be isobaric automorphic representations. If they are isomorphic, then $k = m$ and there is a permutation $P$ on $\{1, \dots, k\}$ such that $\tau_j = \sigma_{P(j)}$.

\subsection{Weak automorphy} 
We briefly recall the definition from the introduction. Given a cuspidal automorphic representation $\Pi$ for {\rm GL}(3), we will say that its adjoint lift is \textit{weakly automorphic} if there exists an isobaric automorphic representation $\tau$ (for ${\rm GL}(n^2-1)$) such that $L^S(s, \Pi, {\rm Ad}) = L^S(s, \tau)$ for a finite set $S$.

Note that, in general, equality of incomplete $L$-functions does not imply  automorphy. For example, Cogdell and Piatetski-Shapiro~\cite{CPS99} have shown that there exist infinitely many irreducible admissible representations for ${\rm GL}_4(\A)$ whose $L$-functions coincide with that of some fixed automorphic representation $\Pi$, but they are not automorphic.

\section{Examples}\label{exs}

To provide some context for the cuspidality condition in Theorems~\ref{t1} and~\ref{t2}, we discuss some examples of adjoint lifts for cuspidal automorphic representations for ${\rm GL}(n)$. The situation is better known when $n = 2$, or for certain special cases (such as when $n = 3$ and the representation is essentially self-dual). In the second subsection, we assume the strong Artin conjecture to obtain other types of examples.

\subsection{Examples arising from functoriality theorems} Langlands' principle of functoriality says that the adjoint lift of a cuspidal automorphic representation $\pi$ for ${\rm GL}(n)$ over a number field $F$ is expected to be an automorphic representation for ${\rm GL}(n^2-1)/F$. One might expect it to be cuspidal provided that $\pi$ does not arise from a transfer from another group to ${\rm GL}(n)$. For example, let $n = 2$ and assume that $\pi$ is monomial, that is to say, it is the automorphic induction of a Hecke character over a number field $K$ that is a quadratic extension of $F$. Then by Gelbart--Jacquet~\cite{GJ78} we know that the adjoint lift is not cuspidal but rather has an isobaric decomposition, into either a pair of {\rm GL}(1) and a {\rm GL}(2) cuspidal automorphic representations, or into three {\rm GL}(1) cuspidal automorphic representations. 
If, on the other hand, $\pi$ is not monomial, then ${\rm Ad}\pi$ is cuspidal.

Outside of ${\rm GL}(2)$, much less is known. If $n = 3$, then the situation is understood for essentially self-dual cuspidal automorphic representations: Such a representation $\Pi$ is known (for example, see~\cite{Ra14}) to be twist-equivalent to the adjoint lift of a cuspidal automorphic representation from {\rm GL}(2) to {\rm GL}(3). Let $\omega_\pi$ denote the central character of $\pi$ and $X=X (\Pi)$ be the (finite) set of places which are either archimedean or at which $\Pi$ is ramified. Then it is well known (via Clebsch--Gordon decomposition) that
\begin{align*}
L^X(s, \Pi, {\rm Ad})= L^X(s, {\rm Sym}^4 \pi \otimes \omega_\pi ^{-2})L^X(s, {\rm Ad}\pi),
\end{align*}
which means that the adjoint lift of $\Pi$ cannot be cuspidal.

\subsection{Examples arising from the strong Artin conjecture}
Given the limited number of unconditional examples, we turn to examples based on the strong Artin conjecture. Given a representation $\rho: {\rm Gal}(\overline{F}/F) \rightarrow {\rm GL}_n(\C)$, the strong Artin conjecture implies that there exists an automorphic representation $\Pi$ for ${\rm GL}(n)/F$ such that the $L$-functions of these two objects are equal 
\begin{align*}
L (s, \rho) = L(s,\Pi)
\end{align*}
and that $\rho$ is irreducible if and only if $\Pi$ is cuspidal.

Given a Galois representation $\rho: {\rm Gal}(\overline{F}/F) \rightarrow {\rm GL}_3(\C)$, we can classify it according to the group structure of its projective image in ${\rm PGL}_3(\C)$, which has been done by Blichfeldt~\cite{Bl17}. We now define two such groups. Fix a non-trivial ninth root of unity $\zeta$, and let $\omega = \zeta^6$. 
Let $S$ and $U$ be the matrices ${\rm diag}(1,\omega, \omega ^2)$ and ${\rm diag}(\zeta, \zeta, \zeta \omega)$, respectively, and set
\begin{align*}
T=     \left( \begin{array}{ccc}
     & 1 &  \\
     &  & 1 \\
1     &  & 
\end{array} \right),\ 
V=(\omega-\omega ^2)^{-1}      \left( \begin{array}{ccc}
1     & 1 & 1 \\
1     & \omega & \omega ^2 \\
1     & \omega ^2 & \omega
     \end{array} \right).
\end{align*}
Define $G_{72}$ to be the group  (of order 72) that is isomorphic to the projective image of $\langle S,T,UVU ^{-1},V\rangle$ in ${\rm PGL}_3(\C)$ and $G_{216}$ to be the group  (of order 216) that is isomorphic to the projective image of $\langle S,T,U,V\rangle$.

From Martin (Section 8.2 of~\cite{Ma04}) we have that if an irreducible complex 3-dimensional group representation $\rho$ has projective image isomorphic to $G_{72}$, $G_{216}$, $A_6$, or ${\rm PSL}_2 (\F_7)$, then ${\rm Ad} (\rho)$ is irreducible. 
For any finite group $G$ there exist number fields $L$ and $K$ such that ${\rm Gal}(L / K)\simeq G$, and so one can construct a representation $\tau(\rho)$ of ${\rm Gal}(\bar{K} / K)$ corresponding to $\rho$. The strong Artin conjecture would imply that, in each of these four cases, $\tau(\rho)$ corresponds to a cuspidal automorphic representation for ${\rm GL}(3)/K$ whose adjoint lift is cuspidal.

With regard to representations of higher dimension, a result from~\cite{Ch15} says that for any odd prime $p$ and positive integer $k$, there exists a group $G$ of order $p ^{2k + 1}$ and a complex irreducible representation $\rho$ of $G$ of degree $p ^k$ such that ${\rm Ad} (\rho)$ is an irreducible representation of $G$. Again applying the strong Artin conjecture provides conjectural examples of cuspidal automorphic representations for ${\rm GL}(p^k)$ whose adjoint lift would be cuspidal.

\section{Proof of Theorems~\ref{t1} and~\ref{t2}}\label{pf1}
\subsection{Proof of Theorem~\ref{t1}}  \label{st1}
For the whole of this section, we assume that the adjoint lifts of  unitary cuspidal automorphic representations $\pi$ and $\pi'$ for ${\rm GL}(n) /F$ are automorphic.

Given the Clebsch--Gordon decompositions of tensor powers of representations, for $X=X(\pi)$ we have the identity
\begin{align*}
L^X(s, {\rm Ad}\pi \times {\rm Ad} \pi) L^X(s, {\rm Ad}\pi)^2 \zeta^X(s) = L^X(s, \pi \times \widetilde{\pi}\times \pi \times \widetilde{\pi}),
\end{align*}
If we furthermore assume that the adjoint lifts of $\pi$ and $\pi'$ are cuspidal, then the self-duality of the adjoint lift implies that $L^X(s, {\rm Ad}\pi \times {\rm Ad} \pi)$ has a simple pole at $s=1$. Since $L^X(s, {\rm Ad}\pi)$ is invertible at $s=1$, we know that the incomplete quadruple product $L$-function $L^X(s, \pi \times \widetilde{\pi}\times \pi \times \widetilde{\pi})$ has a pole of order two at $s=1$. The same holds for $L^X(s, \pi' \times \widetilde{\pi'}\times \pi' \times \widetilde{\pi'})$. \\

For $X = X (\pi,\pi')$, we have
\begin{align*}
L^X(s, {\rm Ad}\pi \times {\rm Ad} \pi' ) L^X(s, {\rm Ad}\pi) & L^X(s, {\rm Ad}\pi') \zeta^X(s) =L^X(s, \pi \times \widetilde{\pi}\times \pi' \times \widetilde{\pi'}).
\end{align*}
If the adjoint  lifts for $\pi$ and $\pi'$ are equal, we have that $L^X(s, {\rm Ad}\pi \times {\rm Ad} \pi')$ has a simple pole at $s=1$. Combined with the fact that both $L^X(s, {\rm Ad}\pi)$ and $L^X(s, {\rm Ad}\pi')$ are invertible at $s=1$, we would conclude that $L^X(s, \pi \times \widetilde{\pi}\times \pi' \times \widetilde{\pi'})$ has a pole of order two at $s=1$.
On the other hand, if  the adjoint lifts for $\pi$ and $\pi'$ are not equal, then $L^X(s, {\rm Ad}\pi \times {\rm Ad} \pi')$ is invertible at $s=1$, and so $L^X(s, \pi \times \widetilde{\pi}\times \pi' \times \widetilde{\pi'})$ has a pole of order one at $s=1$.

So $L^X(s, \pi \times \widetilde{\pi}\times \pi' \times \widetilde{\pi'})$ has a pole of order at most two at $s=1$.\\

We recall the bounds towards the Ramanujan conjecture for a cuspidal automorphic representation $\pi$ for ${\rm GL}(n) / F$ due to Luo-Rudnick-Sarnak~\cite{LRS99}: Every Satake parameter $\alpha_{v,j}(\pi)$ for $\pi$ at a finite place $v$ satisfies the bound $$|\alpha_{v,j}(\pi)| \leq {\rm N}v^{1/2 - (n ^2 + 1)^{-1}}.$$
Applying these bounds in conjunction with the positivity of certain series coefficients, we obtain the inequalities (recall our notation $\ell(s):= \log \left({1}/({s-1})\right)$)
\begin{align} \label{eqn1}
\sum_{v}\frac{|a_v(\pi)|^4}{{\rm N}v^s} \leq   2 \ell(s) + O\left(1\right), 
\end{align} 
and
\begin{align}\label{eqn2}
\sum_{v}\frac{|a_v(\pi)|^2 |a_v(\pi')|^2}{{\rm N}v^s} \leq   2 \ell(s) + O\left(1\right),
\end{align}
as $s \rightarrow 1^+$.

Let $c(v)$, for $v \not \in S$, be an indicator function that takes the value 1 when $a_v(\pi) \neq a_v(\pi')$, and 0 otherwise.
Consider the following inequality, obtained via multiple applications of Cauchy-Schwarz and use of the identity $\bar{a}^2 b ^2 + a ^2 \bar{b}^2 \leq 2 |a|^2 |b|^2$, 
\begin{align}\notag 
&\sum_{v \not \in X}\frac{|a_v(\pi) - a_v(\pi')|^2 c(v)}{{\rm N}v^s} \leq
 \left(\sum_{v \not \in X}\frac{c(v)^2}{{\rm N}v^s}\right)^{1/2}  \cdot \\ \label{eqn4}
&\left[  \left(\sum_{v \not \in X}\frac{|a_v(\pi)|^4}{{\rm N}v^s}\right)^{1/2}  + \left(\sum_{v \not \in X}\frac{|a_v(\pi')|^4}{{\rm N}v^s}\right)^{1/2}  +  \left(\sum_{v \not \in X} 4\frac{|a_v(\pi)|^2 |a_v(\pi')|^2}{{\rm N}v^s}\right)^{1/2}   \right].
\end{align}
Given real-valued non-negative functions $f(x),g(x)$ and a real number $t$, we have the following identities
\begin{align*}
\lim_{x \rightarrow t^+} {\rm inf} (f(x) \cdot g(x)) &\leq \lim_{x \rightarrow t^+} {\rm sup} (f(x)) \cdot
\lim_{x \rightarrow t^+} {\rm inf} (g(x)),\\
\lim_{x \rightarrow t^+} {\rm sup} (f(x) + g(x)) &\leq \lim_{x \rightarrow t^+} {\rm sup} (f(x)) +
\lim_{x \rightarrow t^+} {\rm sup} (g(x)).
\end{align*}
We divide inequality~(\ref{eqn4}) by $\log \left(1 / (s-1)\right)$,
use the identities above for $t = 1$, and apply equations~(\ref{eqn1}) and~(\ref{eqn2}) to get
\begin{align*}
2 \leq (\sqrt{2} + \sqrt{2} + \sqrt{8}) \cdot \underline{\delta} (S)^{1/2},
\end{align*}
where we recall that $\underline{\delta} (S)= \underline{\delta} (S(\pi, \pi'))$ is the lower Dirichlet density of the set of places at which $a_v(\pi) \neq a_v(\pi')$. We obtain
\begin{align*}
\frac 18  \leq \underline{\delta} (S) ,
\end{align*}
proving part (a) of the theorem.\\

For part (b) of the theorem, since we assume that the adjoint lifts are distinct,  $L^X(s, {\rm Ad}\pi \times {\rm Ad}\pi')$ is invertible at $s=1$. We adjust the proof of part (a) accordingly to then get 
\begin{align*}
\frac{1}{3 + 2 \sqrt{2}} \leq \underline{\delta}(S),
\end{align*}
which finishes the proof of Theorem~\ref{t1}.

\subsection{Proof of Theorem~\ref{t2}}
We only provide a sketch of the proof of Theorem~\ref{t2}, as it follows the structure of~\cite{Wa14} (which only concerned the {\rm GL}(2) case), but with the difference that we can no longer rely on the automorphy (and cuspidality criteria) of the symmetric square, cube, and quartic powers (which are known for {\rm GL}(2)), and instead make use of our assumptions about certain adjoint lifts and  Rankin--Selberg products. In particular we assume, as indicated in the conditions of Theorem~\ref{t2}, that the adjoint lifts of $\pi$ and $\pi'$ are cuspidal and distinct, and that the Rankin--Selberg products $\pi \boxtimes \pi'$ and $\pi \boxtimes \widetilde{\pi'}$ are automorphic and cuspidal.

Given the identity, for $X=X(\pi,\pi')$,
\begin{align*}
L^X(s, \pi \boxtimes \widetilde{\pi} \times \pi \boxtimes \widetilde{\pi'}) = L^X(s, {\rm Ad}\pi \times \pi \boxtimes \widetilde{\pi'})L^X(s, \pi \boxtimes \widetilde{\pi'}),
\end{align*}
we note that the second $L$-function is invertible at $s=1$ (since it is of the form ${\rm GL}(n^2-1) \times {\rm GL}(n^2)$ where both components have been assumed to be cuspidal) and the same holds for the third $L$-function. Therefore the $L$-function on the left-hand side is also invertible at $s=1$.  The Luo--Rudnick--Sarnak bounds towards the Ramanujan conjecture then imply 
\begin{align*}
\sum_{v \not \in X}\frac{a_v(\pi)^2 \overline{a_v(\pi)} \overline{a_v(\pi')}}{{\rm N}v^s} = O\left(1\right) 
\end{align*}
as $s \rightarrow 1^+$. Similarly, the $L$-functions $L^X(s, \pi \boxtimes \widetilde{\pi} \times \widetilde{\pi} \boxtimes \pi'), L^X(s, \pi \boxtimes \pi' \times \widetilde{\pi'} \boxtimes \widetilde{\pi'})$ and $L^X(s, \widetilde{\pi} \boxtimes \pi' \times \pi' \boxtimes \widetilde{\pi'})$ are all invertible at $s=1$, and we draw the analogous conclusions about the associated Dirichlet series.

The $L$-function $L^X(s, \widetilde{\pi} \boxtimes \pi' \times \widetilde{\pi} \boxtimes \pi')$ has a simple pole at $s=1$ if $\widetilde{\pi} \boxtimes \pi'$ is self-dual, otherwise it is invertible there. We conclude that 
\begin{align*}
\sum_{v \not \in X}\frac{a_v(\pi)^2  \overline{a_v(\pi')}^2}{{\rm N}v^s}   =   \left\{ \begin{array}{cll}
\ell(s) + O\left(1\right) & \text{if }\widetilde{\pi} \boxtimes \pi' \text{ is self-dual},& \\
O\left(1\right) & \text{otherwise},&\\
     \end{array} \right.
\end{align*}
as $s \rightarrow 1^+$. 

We apply the results of this subsection, along with equations~(\ref{eqn1}) and~(\ref{eqn2}), to 
\begin{align}\label{cseq}
\sum_{}|a_v(\pi) - a_v(\pi')|^2 c(v) {\rm N}v^{-s} \leq
(\sum_{}c(v){\rm N}v^{-s})^{1/2} \cdot
(\sum_{}|a_v(\pi) - a_v(\pi')|^4{\rm N}v^{-s})^{1/2} 
\end{align}
where, as before, $c(v)$ is equal to one if $a_v(\pi) \neq a_v(\pi')$, and zero otherwise. We then obtain $$\underline{\delta}(S) \geq 2/5,$$  
proving Theorem~\ref{t2}. 

\section{The ${\rm GL}(3)$ case}\label{pf2}

We begin by collecting in one place all the bounds applicable to the ${\rm GL}(3)$ setting, arising from Theorems~\ref{t1},~\ref{t2}, and~\ref{gl3}. 
\begin{theorem}
Let $\Pi$ and $\Pi'$ be distinct unitary cuspidal automorphic representations for ${\rm GL}(3)$ over a number field $F$.\\
(a) If the adjoint lifts of $\Pi$ and $\Pi'$ are weakly automorphic,
\begin{align*}
\underline{\delta}(\Pi,\Pi') \geq 1/ 28
\end{align*}
(b) If $\Pi$ and $\Pi'$ are essentially self-dual, 
\begin{align*}
\underline{\delta}(\Pi,\Pi') \geq  2/(17+3\sqrt{21})   
\end{align*}
(c) If $\Pi$ and $\Pi'$ are essentially self-dual and not associated to a cuspidal automorphic representation for ${\rm GL}(2)$ of solvable polyhedral type, 
\begin{align*}
\underline{\delta}(\Pi,\Pi') \geq 1/12
\end{align*}
(i) If ${\rm Ad}(\Pi)$ and ${\rm Ad}(\Pi')$ are both automorphic cuspidal,  
\begin{align*}
\underline{\delta} (\Pi,\Pi') \geq {1}/{8}
\end{align*}
(ii) If ${\rm Ad}(\Pi)$ and ${\rm Ad}(\Pi')$ are both automorphic cuspidal, and furthermore distinct,
\begin{align*}
\underline{\delta}(\Pi,\Pi') \geq 1/(3 + 2 \sqrt{2}) 
\end{align*}
(iii) If the Rankin--Selberg products $\Pi \boxtimes \Pi'$ and $\Pi \boxtimes \widetilde{\Pi'}$, are automorphic cuspidal, and the adjoint lifts of $\Pi$, $\Pi'$ are automorphic cuspidal and distinct, 
\begin{align*}
\underline{\delta}(\Pi,\Pi') \geq {2}/{5}.
\end{align*}
\end{theorem}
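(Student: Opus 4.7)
Parts (i), (ii), and (iii) are immediate specializations of Theorems~\ref{t1} and~\ref{t2} to $n = 3$ and require no further argument. For the substantive parts (a), (b), (c), I plan to run the Cauchy--Schwarz machinery of Section~\ref{st1}: starting from
\begin{align*}
\sum_{v \not\in X} |a_v(\Pi) - a_v(\Pi')|^2 {\rm N}v^{-s} = 2\ell(s) + O(1)
\end{align*}
(using that $L^X(s, \Pi \times \widetilde{\Pi'})$ is invertible at $s = 1$ since $\Pi \not\simeq \Pi'$), inequality~(\ref{eqn4}) yields
\begin{align*}
2 \leq \underline{\delta}(\Pi,\Pi')^{1/2}\,\bigl(\sqrt{M_\Pi} + \sqrt{M_{\Pi'}} + 2\sqrt{M_{\Pi\Pi'}}\bigr),
\end{align*}
where $M_\Pi$, $M_{\Pi'}$, $M_{\Pi\Pi'}$ are upper bounds on the leading $\ell(s)$-coefficients of the three fourth-power sums $\sum_v|a_v(\Pi)|^4{\rm N}v^{-s}$, $\sum_v|a_v(\Pi')|^4{\rm N}v^{-s}$, and $\sum_v|a_v(\Pi)|^2|a_v(\Pi')|^2{\rm N}v^{-s}$. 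In each of (a), (b), (c), the task reduces to bounding the pole orders at $s = 1$ of $L^X(s, \Pi\times\widetilde\Pi\times\Pi\times\widetilde\Pi)$ and of the mixed product $L^X(s, \Pi\times\widetilde\Pi\times\Pi'\times\widetilde{\Pi'})$ under the given hypothesis.

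For part (a), weak automorphy of ${\rm Ad}\,\Pi$ furnishes an isobaric $\tau$ on ${\rm GL}(8)$ with $L^X(s, \Pi, {\rm Ad}) = L^X(s, \tau)$; the simple pole of $L^X(s, \Pi \times \widetilde\Pi) = \zeta^X(s)\,L^X(s, \tau)$ forces $\tau$ to contain no trivial component. The factorization
\begin{align*}
L^X(s, \Pi\times\widetilde\Pi\times\Pi\times\widetilde\Pi) = \zeta^X(s)\,L^X(s, \tau)^2\,L^X(s, \tau\times\tau)
\end{align*}
then reduces $M_\Pi$ to one plus the pole order of $L^X(s, \tau\times\tau)$ at $s=1$, and the analogous identity for the mixed product handles $M_{\Pi\Pi'}$. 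A case analysis over admissible isobaric decompositions of a self-dual dimension-$8$ $\tau$ arising as the adjoint lift of a cuspidal ${\rm GL}(3)$ representation produces uniform bounds that, when substituted, give $\underline{\delta}(\Pi,\Pi') \geq 1/28$.

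For parts (b) and (c), the essentially self-dual hypothesis together with~\cite{Ra14} identifies $\Pi$ up to twist with ${\rm Ad}(\pi)$ for a cuspidal $\pi$ on ${\rm GL}(2)$, and the Clebsch--Gordon identity
\begin{align*}
L^X(s, \Pi, {\rm Ad}) = L^X(s, {\rm Sym}^4\pi \otimes \omega_\pi^{-2})\,L^X(s, {\rm Ad}\,\pi)
\end{align*}
unconditionally decomposes ${\rm Ad}\,\Pi = A \boxplus B$ with $A$ on ${\rm GL}(5)$ and $B$ on ${\rm GL}(3)$; similarly ${\rm Ad}\,\Pi' = A' \boxplus B'$. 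Under hypothesis (c), all four of $A, A', B, B'$ are cuspidal self-dual of the stated dimensions, and since $A, B$ have differing dimensions the only contributions to the pole of $L^X(s, {\rm Ad}\,\Pi \times {\rm Ad}\,\Pi)$ come from the two like-dimensional self-pairings, producing a double pole. This yields $M_\Pi = M_{\Pi'} = 3$, and a parallel computation gives $M_{\Pi\Pi'} \leq 3$, from which Cauchy--Schwarz delivers $\underline{\delta}(\Pi,\Pi') \geq 4/(4\sqrt 3)^2 = 1/12$. For part (b), one or both of ${\rm Sym}^4\pi\otimes\omega_\pi^{-2}$ and ${\rm Ad}\,\pi$ decomposes further when $\pi$ is dihedral, tetrahedral, or octahedral; a case-by-case pole-order computation over the four cross $L$-functions $L^X(s, A\times A')$, $L^X(s, A\times B')$, $L^X(s, B\times A')$, $L^X(s, B\times B')$ and their sub-case alignments, optimized over the worst combination, gives the bound $2/(17+3\sqrt{21})$.

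The main obstacle is the bookkeeping in part (b). The combinatorics of whether the defining quadratic or cubic characters of $\pi$ and $\pi'$ share common components governs which of the cross $L$-factors above contribute a pole at $s = 1$, with each alignment shifting one or more pole orders by one. The algebraic form $2/(17+3\sqrt{21}) = 4/(3\sqrt 3 + \sqrt 7)^2$ signals a specific extremal configuration; the real challenge is to identify that configuration and to verify that no alternative sub-case produces a smaller density bound.
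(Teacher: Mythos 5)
Your reduction of parts (i)--(iii) to Theorems~\ref{t1} and~\ref{t2}, and your treatment of part (c) (cuspidal self-dual ${\rm GL}(5)\boxplus{\rm GL}(3)$ decomposition, triple pole, $M_\Pi=M_{\Pi'}=M_{\Pi\Pi'}=3$, hence $1/12$), match the paper. The gaps are in (a) and (b), and they are not just bookkeeping: your plan runs everything through the three-term Cauchy--Schwarz inequality~(\ref{eqn4}), and that single inequality cannot produce the stated constants in the worst sub-cases. In part (a), if $\tau$ and $\tau'$ each decompose into eight Hecke characters (a case the paper does not exclude), the pole orders give $M_\Pi, M_{\Pi'}, M_{\Pi\Pi'}\leq 9$, and~(\ref{eqn4}) yields only $4/(4\sqrt{9})^2=1/36<1/28$; the paper handles this case by switching to the second inequality~(\ref{cseq}), which requires bounding all mixed series $\sum_v a_v(\Pi)^w\overline{a_v(\Pi)}^x a_v(\Pi')^y\overline{a_v(\Pi')}^z{\rm N}v^{-s}$ with $w+x+y+z=4$, and that in turn uses Kim--Shahidi's ${\rm GL}(2)\times{\rm GL}(3)$ functoriality and the Ramakrishnan--Wang cuspidality criterion; this gives $1/18$ there, so that the global minimum $1/28$ comes from the other branch. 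Moreover, to certify that $7$ is the extremal coefficient in the branch where a ${\rm GL}(m)$-summand with $m\geq 3$ occurs, the paper proves that a Hecke character (Lemma~\ref{L1}) and a ${\rm GL}(2)$ cuspidal summand can each appear at most once in $\tau$, while a ${\rm GL}(3)$ summand \emph{can} repeat (the tetrahedral example), which is exactly what forces the extremal configuration $\kappa\boxplus\kappa\boxplus\chi\boxplus\chi^{-1}$. Your sketch asserts ``uniform bounds that give $1/28$'' without these lemmas or the case split, and as stated the assertion is false in the all-character case.

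The same structural issue affects part (b). When exactly one of $\pi,\pi'$ is of solvable polyhedral type the extremal data are $(M_\Pi,M_{\Pi'},M_{\Pi\Pi'})=(7,3,3)$, giving $4/(\sqrt7+3\sqrt3)^2=2/(17+3\sqrt{21})$ --- consistent with the algebraic form you observed. But when \emph{both} are of solvable polyhedral type, all three coefficients can be as large as $7$, and~(\ref{eqn4}) then gives only $1/28$, which is weaker than the claimed bound $2/(17+3\sqrt{21})\approx 0.065$; the paper again resorts to~(\ref{cseq}) (together with the Kim--Shahidi decompositions of ${\rm Sym}^4\pi$ in the tetrahedral and octahedral cases and control of the related mixed Dirichlet series) to obtain $1/14$ in that sub-case, so that the overall minimum is $2/(17+3\sqrt{21})$. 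You explicitly defer the identification of the extremal configuration and the verification that no sub-case does worse, but the missing ingredient is precisely the second inequality~(\ref{cseq}) and the functorial input needed to apply it; without it your ``optimize over the four cross $L$-functions'' strategy cannot close either (a) or (b).
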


Parts (i)-(iii) of this theorem arise from Theorems~\ref{t1} and~\ref{t2}. In this section we will prove parts (a)-(c), which is the content of Theorem~\ref{gl3}. 

\subsection{Proof of part (a)} 

If $\Pi$ has a weakly automorphic adjoint lift, denote the corresponding automorphic representation as $\tau$, which, for some finite set of places $T$, satisfies $L^T(s, \Pi, {\rm Ad})= L^T(s, \tau)$. For the adjoint lift of $\Pi'$, define $\tau'$ and $T'$ analogously. Let $R$ denote the (finite) set of places at which any of the automorphic representations that arise in this section are ramified, and let ${\rm arch}(F)$ be the set of archimedean places of $F$. Define 
$X := T \cup T' \cup R \cup {\rm arch}(F)$.

\begin{lemma}\label{L1}
Let $\Pi$ be a unitary cuspidal automorphic representation for ${\rm GL}(n) /F$ and let  $\chi$ be a non-trivial Hecke character over $F$. Assume that the adjoint lift of $\Pi$ is weakly automorphic and corresponds to the isobaric automorphic representation $\tau$. Then $\chi$ can only occur as a summand of $\tau$ at most once.
\end{lemma}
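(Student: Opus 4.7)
The strategy is to compute the order of the pole at $s=1$ of $L^X(s, \tau \times \chi^{-1})$ in two different ways and compare. Suppose for contradiction that $\chi$ occurs as an isobaric summand of $\tau$ with multiplicity $m \geq 2$, and write
\[
\tau \simeq \underbrace{\chi \boxplus \cdots \boxplus \chi}_{m \text{ copies}} \boxplus \tau_0,
\]
where no summand of $\tau_0$ is isomorphic to $\chi$. Factoring the Rankin--Selberg product $L^X(s, \tau \times \chi^{-1})$ across this isobaric decomposition yields $\zeta_F^X(s)^m \cdot L^X(s, \tau_0 \times \chi^{-1})$; by Jacquet--Shalika~\cite{JS81}, each factor coming from $\tau_0$ is holomorphic and non-vanishing at $s=1$ (since no cuspidal summand of $\tau_0$ is dual to $\chi^{-1}$), and therefore $L^X(s, \tau \times \chi^{-1})$ has a pole of order exactly $m$ at $s=1$.

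On the other hand, the weak-automorphy hypothesis $L^T(s, \Pi, {\rm Ad}) = L^T(s, \tau)$ forces the Satake parameters of $\tau$ and of ${\rm Ad}(\Pi)$ to coincide away from $T$, so after enlarging $X$ to contain $T$ together with the places at which $\chi$ is ramified one has
\[
L^X(s, \tau \times \chi^{-1}) \;=\; L^X(s, \Pi, {\rm Ad} \otimes \chi^{-1}).
\]
From the decomposition $\Pi \otimes \widetilde{\Pi} \simeq {\rm Ad}(\Pi) \oplus \mathbf{1}$ on the Satake side one further obtains
\[
L^X(s, \Pi \times \widetilde{\Pi} \otimes \chi^{-1}) \;=\; L^X(s, \Pi, {\rm Ad} \otimes \chi^{-1}) \cdot L^X(s, \chi^{-1}),
\]
and since $\chi$ is non-trivial, $L^X(s, \chi^{-1})$ is entire and non-vanishing at $s=1$. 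It follows that the pole order at $s=1$ of $L^X(s, \tau \times \chi^{-1})$ equals that of the Rankin--Selberg $L$-function $L^X(s, \Pi \times (\widetilde{\Pi} \otimes \chi^{-1}))$, which by Jacquet--Shalika has at most a simple pole at $s=1$ (with a pole precisely when $\Pi \otimes \chi \simeq \Pi$). Therefore $m \leq 1$, contradicting the assumption.

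No single step is the main obstacle; the only bookkeeping care required is in checking that passing between incomplete $L$-functions taken with respect to different finite sets of places does not affect the pole order at $s=1$, which is immediate because each omitted local Euler factor is a finite product of terms of the form $(1 - a \cdot {\rm N}v^{-s})^{\pm 1}$ that are holomorphic and non-zero there.
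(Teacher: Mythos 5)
Your proposal is correct and follows essentially the same route as the paper: twist by $\chi^{-1}$, use the identity $L^X(s,\tau\otimes\chi^{-1})L^X(s,\chi^{-1})=L^X(s,\Pi\times(\widetilde{\Pi}\otimes\chi^{-1}))$ coming from weak automorphy and ${\rm Ad}$, and bound the pole at $s=1$ by the Jacquet--Shalika simple-pole criterion, with non-vanishing of the remaining factors giving the multiplicity count. The only difference is presentational (you run an explicit contradiction with multiplicity $m\geq 2$ and spell out the bookkeeping of excluded places), not mathematical.
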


\begin{proof} Note that
\begin{align*}
L^X(s, \tau \otimes \overline{\chi})L^X(s, \overline{\chi})=
L^X(s, \Pi \times (\overline{\Pi} \otimes \overline{\chi}))
\end{align*}
The right-hand side $L$-function has a simple pole at $s=1$ if and only if $\overline{\Pi}$ admits a self-twist by $\overline{\chi}$. Otherwise, this $L$-function is invertible at $s=1$. This means that $\tau$ can only have $\chi$ as a summand at most once.
\end{proof}

\begin{lemma}
Let $\Pi$ be a unitary cuspidal automorphic representation for ${\rm GL}(3)/F$ with a weakly automorphic adjoint lift that corresponds to $\tau$. Let $\pi$ be a cuspidal automorphic representation for ${\rm GL}(2)/F$. Then $\pi$ can only occur as a summand for $\tau$ at most once.
\end{lemma}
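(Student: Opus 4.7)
The plan is to imitate the argument of Lemma~\ref{L1}, with the Hecke character replaced by the cuspidal ${\rm GL}(2)$ representation $\pi$. Multiplying the weak-automorphy relation $L^X(s, \tau)\,\zeta_F^X(s) = L^X(s, \Pi \times \widetilde{\Pi})$ by $L^X(s, \widetilde{\pi})$ at the level of Satake parameters yields
\begin{align*}
L^X(s, \tau \times \widetilde{\pi}) \cdot L^X(s, \widetilde{\pi}) \;=\; L^X(s, \Pi \times \widetilde{\Pi} \times \widetilde{\pi}),
\end{align*}
and the Kim--Shahidi theorem on the automorphy of ${\rm GL}(2) \times {\rm GL}(3)$ Rankin--Selberg products lets me rewrite the right-hand side as $L^X(s, (\Pi \boxtimes \widetilde{\pi}) \times \widetilde{\Pi})$. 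Its order of pole at $s=1$ equals the multiplicity of $\Pi$ as an isobaric summand of $\Pi \boxtimes \widetilde{\pi}$, and since $L^X(s, \widetilde{\pi})$ is entire and non-vanishing at $s=1$, the pole of $L^X(s, \tau \times \widetilde{\pi})$ at $s=1$ (which counts the multiplicity of $\pi$ in $\tau$) matches it. The task therefore reduces to showing that $\Pi$ occurs at most once in the isobaric decomposition of $\Pi \boxtimes \widetilde{\pi}$.

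For this I would appeal to the cuspidality criterion of Ramakrishnan--Wang for ${\rm GL}(2) \times {\rm GL}(3)$. When $\Pi \boxtimes \widetilde{\pi}$ is cuspidal on ${\rm GL}(6)$, the multiplicity is trivially $0$ by dimension. Otherwise there are two sub-cases. In the first, $\widetilde{\pi}$ is non-dihedral and ${\rm Ad}\widetilde{\pi}$ is twist-equivalent to $\Pi$. Writing $\Pi = {\rm Ad}\widetilde{\pi} \otimes \eta$ and applying the Clebsch--Gordan identity ${\rm Ad}\widetilde{\pi} \boxtimes \widetilde{\pi} = ({\rm Sym}^3 \widetilde{\pi} \otimes \omega_{\widetilde{\pi}}^{-1}) \boxplus \widetilde{\pi}$ (with Kim's automorphy of ${\rm Sym}^3$) exhibits $\Pi \boxtimes \widetilde{\pi}$ as an isobaric sum of cuspidal representations on ${\rm GL}(2)$ and ${\rm GL}(4)$, with the ${\rm GL}(4)$ piece further splitting into two ${\rm GL}(2)$ summands in the tetrahedral case. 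No ${\rm GL}(3)$ summand appears, so again the multiplicity of $\Pi$ is $0$.

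The main obstacle is the remaining sub-case, in which $\widetilde{\pi}$ is dihedral, induced from a Hecke character $\nu$ over a quadratic extension $M/F$, and $\Pi$ is automorphically induced from a Hecke character on a non-Galois cubic extension. The projection formula gives $\Pi \boxtimes \widetilde{\pi} = {\rm AI}_{M/F}(\Pi_M \otimes \nu)$, and its non-cuspidality forces $\Pi_M \otimes \nu$ to be ${\rm Gal}(M/F)$-invariant, so the isobaric decomposition takes the form $\Sigma \boxplus (\Sigma \otimes \eta_M)$, where $\Sigma$ is a cuspidal representation on ${\rm GL}(3)/F$ whose base change to $M$ is $\Pi_M \otimes \nu$, and $\eta_M$ is the quadratic character of $F$ associated to $M/F$. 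For both summands to equal $\Pi$ one would require $\Pi \otimes \eta_M \simeq \Pi$; comparing central characters on ${\rm GL}(3)$ yields $\omega_\Pi = \omega_\Pi \cdot \eta_M^3$, whence $\eta_M^3 = 1$. Combined with $\eta_M^2 = 1$, this forces $\eta_M$ to be trivial, contradicting its non-triviality. Hence at most one summand equals $\Pi$, giving the desired multiplicity bound.
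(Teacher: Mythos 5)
Your argument is correct, but it reaches the conclusion by a different route than the paper. Both proofs start the same way: twisting the identity $L^X(s,\tau)\,\zeta^X(s)=L^X(s,\Pi\times\widetilde{\Pi})$ by $\widetilde{\pi}$ and comparing pole orders reduces the lemma to showing that $\Pi$ occurs at most once as an isobaric summand of $\Pi\boxtimes\widetilde{\pi}$. At that point the paper simply quotes the proof of Theorem 8.1 of Ramakrishnan--Wang: if a ${\rm GL}(3)\times{\rm GL}(2)$ functorial product decomposes into two cuspidal ${\rm GL}(3)$ pieces, those pieces are necessarily of the form $(\Pi\otimes\nu)\boxplus(\Pi\otimes\nu\delta)$ with $\delta$ a non-trivial quadratic character, and hence are distinct (a ${\rm GL}(3)$ cuspidal representation cannot admit a quadratic self-twist). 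You instead re-derive the needed structure from the Ramakrishnan--Wang cuspidality \emph{criterion} by cases: in the non-dihedral case you use the Clebsch--Gordan identity and Kim--Shahidi's ${\rm Sym}^3$ to see that no ${\rm GL}(3)$ summand can occur at all, and in the dihedral case you use the projection formula and Arthur--Clozel automorphic induction/base change over the quadratic extension $M/F$ to get the shape $\Sigma\boxplus(\Sigma\otimes\eta_M)$, finishing with the central-character argument $\eta_M^3=1$, $\eta_M^2=1$ --- which is exactly the mechanism hiding behind the paper's phrase ``$\delta$ is a (non-trivial) quadratic character, proving the lemma.'' Your version is more self-contained (it does not require unwinding the proof of Theorem 8.1 of~\cite{RW04}) at the cost of invoking more functorial machinery (${\rm Sym}^3$, quadratic base change and automorphic induction, and the identification of the Kim--Shahidi product with ${\rm AI}_{M/F}(\Pi_M\otimes\nu)$ via strong multiplicity one for isobaric representations); two small steps you leave implicit --- that $\Pi_M$, and hence $\Pi_M\otimes\nu$, is cuspidal, and that $L^X(s,\widetilde{\pi})$ is non-zero and finite at $s=1$ --- follow from the same quadratic self-twist obstruction and from standard non-vanishing plus the Luo--Rudnick--Sarnak bounds, so they are not genuine gaps.
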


\begin{proof} We have 
\begin{align*}
L^X(s, \tau \otimes \overline{\pi})L^X(s, \overline{\pi})=
L^X(s, \Pi \times (\overline{\Pi} \boxtimes \overline{\pi})).
\end{align*}
 If $\pi$ occurs as a summand for $\tau$ twice, then the first $L$-function in the equation has a pole of order two. Given the $L$-function on the right-hand side, this means that $\Pi \boxtimes \pi = \Pi \boxplus \Pi$.

From the proof of Theorem 8.1 of Ramakrishnan--Wang~\cite{RW04}, we know that if the automorphic product of a cuspidal automorphic representation $\Pi$ for {\rm GL}(3) with a cuspidal automorphic representation $\pi$ for {\rm GL}(2) has an isobaric decomposition into two cuspidal automorphic representations for {\rm GL}(3), then it must have the decomposition $\Pi \boxtimes \pi = (\Pi \otimes \nu)\boxplus (\Pi \otimes \nu \delta)$, where $\nu$ and $\delta$ are Hecke characters. In particular, $\delta$ is a (non-trivial) quadratic character, proving the lemma.
\end{proof}

\begin{remark}
One might ask if there is also an analogous lemma for {\rm GL}(3)-summands, namely, whether the isobaric automorphic representation corresponding to the weakly automorphic adjoint lift of a cuspidal automorphic representation for ${\rm GL}(3)$ cannot have the same {\rm GL}(3)-summand twice. This does not hold as there is the following set of counterexamples: Let $\pi$ be a cuspidal automorphic representation for ${\rm GL}(2)/F$ of tetrahedral type (which is equivalent to the condition that ${\rm Sym}^2 \pi$ is cuspidal but ${\rm Sym}^3 \pi$ is not). Then the adjoint lift $\Pi:={\rm Ad}\pi$ is a cuspidal automorphic representation for {\rm GL}(3). Using Clebsch--Gordon decompositions and the work of Kim--Shahidi~\cite{KS02} on functorial lifts, we get that 
\begin{align*}
L^X(s, \Pi, {\rm Ad}) = L^X(s, \mu)L^X(s, \mu ^2)L^X(s, {\rm Ad}\pi)^2 .\\
\end{align*}
\end{remark}

We now proceed similarly to the proof of Theorem~\ref{t1}, but this time we need to consider the various possibilities of how the isobaric automorphic representations $\tau, \tau'$ (corresponding to the weakly automorphic adjoint lifts of $\Pi$ and $\Pi'$, respectively) might decompose. We adjust our approach depending on whether the isobaric decomposition includes any cuspidal automorphic representations for ${\rm GL}(n)$, where $n \geq 3$.

If that is the case, then the weakest lower bound on $\underline{\delta}\left(\Pi, \Pi'\right)$ occurs if $\tau$ and $\tau'$ both have the isobaric decomposition $ \kappa \boxplus \kappa \boxplus \chi \boxplus \chi ^{-1} $, where $\kappa$ is a self-dual cuspidal automorphic representation for ${\rm GL}(3)$ and $\chi$ is a Hecke character of order at least three. Then
\begin{align*}
\sum_{v \not \in X} \frac{|a_v(\Pi^{(\prime)})|^4}{{\rm N}v^s} \leq 7 \ell(s) + O(1),\\
\sum_{v \not \in X} \frac{|a_v(\Pi)|^2|a_v(\Pi')|^2}{{\rm N}v^s} \leq 7 \ell(s) + O(1),
\end{align*}
which, using equation~(\ref{eqn4}), results in a bound of
\begin{align*}
\frac{4}{(4 \cdot \sqrt{7})^2} = \frac{1}{28} \leq \underline{\delta}(\Pi, \Pi').
\end{align*}

If there is no summand that is a cuspidal automorphic representation for {\rm GL}(n), $n \geq 3$, then the weakest lower bound on $\underline{\delta}(\Pi, \Pi')$ arises if $\tau$ and $ \tau'$ have the same isobaric decomposition into eight Hecke characters (all of which must be distinct, by Lemma~\ref{L1}).
In this case, 
$L^X(s, \tau \times \tau)$
has a pole at $s=1$ of order at most eight, and therefore, as $s \rightarrow 1^+$,
\begin{align*}
\sum_{v \not \in X} \frac{|a_v(\Pi)|^4}{{\rm N}v^s} \leq 9 \ell(s) + O(1),
\end{align*}
and similarly for $\Pi'$. We also have
\begin{align*}
\sum_{v \not \in X} \frac{|a_v(\Pi)|^2 |a_v (\Pi')|^2}{{\rm N}v^s} \leq 9 \ell(s) + O(1).
\end{align*}
Similar inequalities are also obtained for other Dirichlet series of the form 
\begin{align*}
\sum_{v \not \in X}\frac{a_v(\Pi)^w \overline{a_v(\Pi)^x} a_v(\Pi')^y \overline{a_v(\Pi')^z }}{{\rm N}v^s},
\end{align*}
where $w,x,y,z$ are non-negative integers that sum up to 4. Here, we rely in part on knowing the automorphy of the ${\rm GL}(2) \times {\rm GL}(3)$ Rankin--Selberg product, due to Kim--Shahidi~\cite{KS00}, and the associated cuspidality criteria, due to Ramakrishnan-Wang~\cite{RW04}.

Combining these Dirichlet series inequalities in conjunction with equation (\ref{cseq}) in the same manner as in the previous section, we then get the bound
\begin{align*}
\frac{1}{18} \leq \underline{\delta}(\Pi, \Pi').
\end{align*}

Note that, in this setting, the bound obtained is equal to that of Ramakrishnan's conjecture for the ${\rm GL}(3)$ case.

\subsection{Proof of parts (b) and (c)} Since $\Pi$ and $\Pi'$ are essentially self-dual, we can write them as ${\rm Sym}^2 \pi \otimes \nu$ and ${\rm Sym}^2 \pi' \otimes \nu'$, where $\pi, \pi'$ are cuspidal automorphic representations for ${\rm GL}(2)/F$ that are non-dihedral, and $\nu, \nu'$ are Hecke characters. Clebsch--Gordon decompositions imply 
\begin{align*}
L^X(s, \Pi, {\rm Ad}) = L^X(s,{\rm Sym}^4 \pi \otimes \omega ^{-2} \boxplus {\rm Sym}^2 \pi \otimes \omega ^{-1} )
\end{align*}
where $\omega$ is the central character for $\pi$; note that the symmetric square and symmetric fourth power lifts are known to be automorphic due to the work of  Gelbart--Jacquet~\cite{GJ78} and Kim~\cite{Ki03}, respectively. The analogous equation also holds for $\Pi'$.

Assume that $\pi$, $\pi'$ are not of solvable polyhedral type. Then their symmetric square and fourth power lifts are cuspidal~\cite{KS02}. The  $L$-function $L^X(s, \Pi \times \Pi \times \overline{\Pi} \times \overline{\Pi})$ can be rewritten as
\begin{align*}
L^X(s, (1 \boxplus {\rm Sym}^4 \pi \otimes \omega ^{-2} \boxplus {\rm Sym}^2 \pi \otimes \omega ^{-1} ) \times (1 \boxplus {\rm Sym}^4 \pi \otimes \omega ^{-2} \boxplus {\rm Sym}^2 \pi \otimes \omega ^{-1} ))
\end{align*}
and so we see it has a pole of order 3 at $s=1$. Taking logarithms and using positivity we get 
\begin{align*}
\sum_{v \not \in X} \frac{|a_v(\Pi)|^4}{{\rm N}v^s} \leq 3 \ell(s) + O(1),
\end{align*}
as $s \rightarrow 1^+$, and similarly for $\Pi'$. The same approach also gives 
\begin{align*}
\sum_{v \not \in X} \frac{|a_v(\Pi)|^2 |a_v (\Pi')|^2}{{\rm N}v^s} \leq 3 \ell(s) + O(1).
\end{align*}
Using these results in conjunction with equation~(\ref{eqn4}), we obtain 
\begin{align*}
\underline{\delta }(\Pi,\Pi') \geq 1/12.
\end{align*}

If one or both of $\pi$ , $\pi'$ are of solvable polyhedral type, then the associated symmetric fourth power lifts are not cuspidal~\cite{KS02}: If $\pi$ is of tetrahedral type, then 
\begin{align*}
{\rm Sym}^4 \pi = ({\rm Sym}^2 \pi \otimes \omega ) \boxplus \omega ^2 \mu \boxplus \omega ^2 \mu ^2,
\end{align*}
where $\omega $ is the central character of $\pi$ and $\mu$ is a non-trivial (cubic) Hecke character such that ${\rm Ad}\pi = {\rm Ad}\pi \otimes \mu$. If $\pi$ is of octahedral type (meaning that its symmetric cube lift is cuspidal but its symmetric fourth power lift is not), then 
\begin{align*}
{\rm Sym}^4 \pi = \sigma \boxplus ({\rm Ad}\pi \otimes \eta),
\end{align*}
where $\sigma$ is a dihedral cuspidal automorphic representation for {\rm GL}(2) and $\eta$ is a non-trivial quadratic Hecke character.

Using these decompositions, we obtain
\begin{align*}
\sum_{v \not \in X} \frac{|a_v(\Pi)|^4}{{\rm N}v^s} &\leq 7 \ell(s) + O(1),\\
\sum_{v \not \in X} \frac{|a_v(\Pi)|^2 |a_v (\Pi')|^2}{{\rm N}v^s} &\leq 7 \ell(s) + O(1),
\end{align*}
as $s \rightarrow 1^+$.
If exactly one of $\pi$, $\pi'$ is not of solvable polyhedral type, we proceed in the same manner as before, to obtain
\begin{align*}
\underline{\delta }(\Pi,\Pi') \geq    2/(17+3\sqrt{21})   
\end{align*}
If both $\pi$ and $\pi'$ are of solvable polyhedral type, we can instead apply the asymptotic results above (along with similar results for other related Dirichlet series) with inequality (\ref{cseq}) to obtain
\begin{align*}
\underline{\delta }(\Pi,\Pi') \geq 1/14.\\
\end{align*}

\subsection*{Acknowledgements} The author would like to thank Jim Cogdell and Kimball Martin for their helpful feedback on an earlier draft of this paper.

\end{document}